\newtheorem{theorem}{Theorem}[section]
\newtheorem{lemma}[theorem]{Lemma}
\newtheorem{definition}[theorem]{Definition}
\newtheorem{proposition}[theorem]{Proposition}
\newtheorem{remark}[theorem]{Remark}
\newtheorem{example}[theorem]{Example}
\newcommand{\R}{\mathbb{R}}
\newcommand{\cH}{\mathcal{H}}
\newcommand{\N}{\mathbb{N}}
\newcommand{\T}{\mathbb{T}}
\newcommand{\cP}{\mathcal{P}}
\newcommand{\cL}{\mathcal{L}}
\newcommand{\eps}{\varepsilon}
\newcommand{\diver}{{\rm div}}
\DeclareMathOperator{\Wass}{\mathbf{d}_1}
\numberwithin{equation}{section}
\title{A note on first order quasi-stationary Mean Field Games}
\author{Fabio Camilli \thanks{Dip. di Scienze di Base e Applicate per l'Ingegneria,  Universit{\`a}  di Roma  ``La Sapienza", via Scarpa 16, 00161 Roma, Italy  ({\tt  fabio.camilli@uniroma1.it}).} \and Claudio Marchi \thanks{Dip. di Matematica "Tullio Levi-Civita'', Universit\`a di Padova, via Trieste, 35121 Padova, Italy ({\tt marchi@math.unipd.it}).} \and Cristian Mendico \thanks{Institut de Math\'ematique de Bourgogne, UMR 5584 CNRS, Universit\'e Bourgogne, 21000 Dijon, France ({\tt cristian.mendico@u-bourgogne.fr})}}
\begin{document}
\maketitle
\begin{abstract}
Quasi-stationary Mean Field Games models consider agents who base their strategies on current information without forecasting future states. In this paper we address the first-order quasi-stationary Mean Field Games system, which involves an ergodic Hamilton-Jacobi equation and an evolutive continuity equation. Our approach relies on weak KAM theory. We introduce assumptions on the Hamiltonian and coupling cost to ensure continuity of the Peierls barrier and the Aubry set over time. These assumptions, though restrictive, cover interesting cases such as perturbed mechanical Hamiltonians.
\end{abstract}

\noindent\textit{Keywords:} Mean Field Games, weak KAM theory, viscosity solutions. \\
\noindent\textit{  Mathematics Subject Classification:} 35B40, 35Q49, 49N80, 37J51.
	
	

\section{Introduction}

Mean Field Games (MFG) theory is a mathematical framework for analyzing decision-making processes among large populations of small interacting agents. It was independently introduced by Lasry-Lions \cite{ll} and Huang-Caines-Malham\'e \cite{hcm}. MFG combines game theory and partial differential equations (PDE) to address problems involving many players, where traditional game theory becomes impractical due to complexity. In this model, agents are fully rational, meaning they have perfect knowledge of the system's dynamics and can predict the evolution of the mean field based on their actions and those of others. The game reaches a mean field equilibrium where all strategies and the mean field distribution are consistent, and no agent can unilaterally improve their outcome.
The corresponding PDE system   includes a backward-in-time Hamilton-Jacobi-Bellman   equation for the agents' strategy evolution and a forward-in-time Fokker-Planck  equation for the state distribution dynamics.\par

However, real-world agents often deviate from full rationality due to unpredictability and the need to learn and adapt their strategies over time. In quasi-stationary   MFG model, introduced by Mouzouni \cite{mou} and studied further in \cite{cm}, the generic agent cannot predict the future evolution of the population. Instead, the agent makes strategic decisions based solely on the information available at the current moment. 
This means that each agent observes the current state of  the mean field $m(t)$ and, without attempting to forecast how this might change over time,   optimize the cost functional
\begin{equation*}
	\liminf_{T \to \infty}\frac{1}{T}   \inf_{a}\mathbb{E}_{x,t}\Big[\int_{t}^{T} L(X(s),  a(X(s)))+F(X(s), m(t))\;ds\Big]
\end{equation*}
where
\[dX(s)=a(X(s))ds+\sqrt{2}dW(s), \qquad X(t)=x \]
and $a$ is a feedback control law. The corresponding MFG system is given by
\begin{equation}  \label{MFG_2order}
\left\{
\begin{aligned}
& - \Delta u + H(x,Du(t) )+\alpha(t) =F(x,m(t)) \quad &\textrm{ in }  \T^d,\,\forall t\in [0,T] \\
&\partial_{t}m -  \Delta m -\diver(m  H_{p}(x,Du(t)))=0&\textrm{ in } \T^d \times(0,T)\\
& m (0)=m_{0}& \textrm{ in } \T^d,
\end{aligned}
\right.
\end{equation}
where $\T^d$ is the unit torus in $\R^d$. 
Let us underline an important feature of problem~\eqref{MFG_2order}: the Hamilton-Jacobi-Bellman equation is stationary while the Fokker-Planck equation is evolutive; hence, the standard structure of MFG systems, where the Fokker-Planck equation is the dual of the linearized Hamilton-Jacobi-Bellman equation, is lost. Nevertheless, the time $t$ still affects the first equation in the system through the time-dependent distribution~$m$ of players; in other words, the time~$t$ plays the role of a parameter in the Hamilton-Jacobi-Bellman equation.  The existence of a solution to \eqref{MFG_2order} can be established by utilizing a continuous dependence estimate for the Hamilton-Jacobi-Bellman   equation. This estimate guarantees the required time regularity of the vector field $D_p H(x,Du(t))$ that drives the Fokker-Planck   equation. Uniqueness of the solution does not require monotonicity  of  the
coupling cost and it is   achieved by applying Gronwall's Lemma, instead of the standard  duality argument. \par
The aim of this paper is to address the well-posedness of the  first order quasi-stationary  MFG system 
\begin{equation}\label{MFG_erg} 
\left\{
\begin{aligned}
(i)\quad	&\cH(x,Du(t), m(t))+\alpha(t)=0 \quad &\textrm{ in } \T^d,\,\forall t\in [0,T],\\
(ii)\quad	&\partial_{t}m-\diver(m  D_p\cH(x,Du(t), m(t)))=0 &\textrm{ in } \T^d \times(0,T),\\
(iii)\quad	& m(0)=m_{0} & \textrm{ in } \T^d.
\end{aligned}
\right.
\end{equation}
In contrast   to the second-order problem, several intriguing issues arise in the first order case and  to prove   existence of solutions   is more challenging. Indeed, for a fixed $t$, while the ergodic constant \(\alpha\) in \eqref{MFG_erg}.(i) is uniquely determined, one cannot expect to have  a unique viscosity solution (even up to an additive constant).  Some selection techniques are available in literature, see \cite{aips} and \cite{dfiz}. However, also using these techniques, continuous dependence  results for the Hamilton-Jacobi equation only ensures the continuity of $u(t)$ with respect to time, without providing any information on the regularity of$Du(t)$,  which is  essential for proving existence of a solution to the system \eqref{MFG_erg} using a fixed point argument.\par
 We investigate the quasi-stationary MFG system \eqref{MFG_erg} using an approach based on weak KAM theory (see \cite{Fa}). The structure of the solution set of the ergodic Hamilton-Jacobi equation is deeply connected with the associated dynamical system, particularly with the properties of the Peierls barriers and the Aubry set (see Section \ref{Peierls} for definitions). Since the Hamiltonian depends on the measure $m(t)$, the Aubry set can vary over time $t$ and typically lacks stability with respect to perturbations. To address this issue, we introduce specific assumptions on the Hamiltonian and on the coupling cost in order to achieve continuity of the Peierls barrier and the Aubry set over time. 
 Although our assumptions are somewhat restrictive, they still encompass interesting cases, such as a perturbed mechanical Hamiltonian. By exploiting this structure, we can prove the existence of a solution to system \eqref{MFG_erg} via a  fixed point argument. However, it is worth mentioning that the structural assumptions in force does not guarantee the uniqueness of solutions due to the instability of the Aubry set w.r.t. time. Indeed, uniqueness (as in \cite{mou}) is strongly related to the continuity of the gradient of $u(t, \cdot)$ w.r.t. time, a property connected  to the stability of the Aubry set  which fails even in simple case as the mechanical system (see, for instance, \cite[Example 1.3]{Qinbo}).   
\par
We mention that Weak KAM theory has been exploited to study the long time behavior of MFG
(see \cite{Bardi, cannarsa, cannarsa_1, card,card_mend, Kaizhi, Kaizhi_1}).
Moreover the first-order quasi-stationary MFG model shares similarities with the Hughes model, another classical model used to study agent behavior with partial rationality \cite{amad}. Like the system \eqref{MFG_erg}, the Hughes model comprises a nonlinear conservation law coupled with a stationary eikonal equation. Also in this case, the primary challenge lies in the irregularity of the gradient of the solution to the eikonal equations, which affects the flux in the conservation law. Existence results for the Hughes model are available only for one-dimensional spatial domains \cite{ag}, although the model can be formulated in any spatial dimension.

\vspace{0.2cm}
The paper is organized as follows. In Section \ref{Peierls} we introduce the tools from weak KAM theory tailored to the MFG structure. Section \ref{quasistat_MFG} is devoted to the proof of the main result on the existence of solutions to the quasi-stationary MFG system \eqref{MFG_erg}.

\vspace{0.2cm}
\subsection*{Acknowledgement}
The authors were partially supported by Istituto Nazionale di Alta Matematica, INdAM-GNAMPA project 2024. The third authors were partially supported by the MIUR Excellence Department Project awarded to the Department of Mathematics, University of Rome Tor Vergata, CUP E83C23000330006.


\section{The ergodic Hamilton-Jacobi equation}\label{Peierls}
In this section, we recall definitions and preliminary results from   weak
KAM theory, which will be used later in this paper (see \cite{Fa} for more details).\par
Let $\cP(\T^d)$ be the set of probability measures on   $\T^d$ which is a compact topological space when endowed with the weak$^*$-convergence. Moreover the topology  on  $\cP(\T^d)$ is metrizable by means of the Kantorovich-Rubinstein distance 
\[
\Wass(\mu,\mu')=\sup\big\{\int_{\T^d} f(x)d(\mu-\mu'):\,f:\T^d\to\R\quad\mbox{is $1$-Lipschitz continuous}\big\}.
\]
For a given $m\in \cP(\T^d)$, we consider the ergodic Hamilton-Jacobi equation
\begin{equation}\label{eq:HJerg}
	\cH(x,Du, m)+\alpha=0 \quad \textrm{ in } \T^d,
\end{equation}
where the unknowns are the ergodic constant $\alpha$ and the viscosity solution $u$. We denote by $\cL$ the   Lagrangian associated to $\cH$, i.e.
\begin{equation}\label{Legendre}
\cL(x, v,m) = \sup_{p \in \R^d} \{\langle p, v \rangle - \cH(x, p,m)\}.
\end{equation}
We make following assumptions on the Hamiltonian $\cH$. 
\begin{itemize}
\item[({\bf H})] For each $m\in\cP(\T^d)$,   $\cH(\cdot,\cdot,m) \in C^2(\T^d \times \R^d)$ and, for each $(x, p) \in \T^d \times \R^d$, the map $m \mapsto \cH(x, p, m)$ is continuous w.r.t. $\Wass$ distance. Moreover, there exists a constant $c_H > 0$ (independent of $m$)  such that for any $(x, p) \in \T^d \times \R^d$ we have
\begin{align*}
\cH(x, p,m)  \geq\; & c_H( |p|^2 - 1),\\
D^2_p \cH(x, p,m) \geq\; & 1/c_H,\\
|D_p \cH(x, p,m)| \leq\; & c_H(1+|p|).
\end{align*}
\end{itemize}
In particular, by the compactness of $\T^d\times\cP(\T^d)$, for each $R>0$, there exists a modulus of continuity $\omega_R: [0,\infty) \to [0, \infty)$ such that for any $x \in \T^d$, any $p\in B_R$, and any $m_1, m_2 \in \cP(\T^d)$ we have
\begin{equation}\label{eq:cH_m}
|\cH(x,p, m_1) - \cH(x,p, m_2)| \leq \omega_R(\Wass(m_1, m_2)).
\end{equation}
It is well known that the Lagrangian $\cL$ satisfies the same properties of $\cH$. Given a probability measure $m\in\cP(\T^d)$, we define the family of action functions
\[
A^{m}:  [0,\infty)\times\T^{d}\times \T^{d} \to \R
\]
as 
\begin{equation*}
A^{m}(\tau, x, y) = \inf\left\{\int_{0}^{\tau} \cL(\gamma(s), \dot\gamma(s), m )\;ds : \gamma \in AC([0,\tau]; \T^d), \; \gamma(0)=x,\; \gamma(\tau)=y \right\},
\end{equation*}
and the  Peierls  barrier  as
\begin{equation*}
h^{m}(x, y) = \liminf_{\tau \to +\infty} \big[A^{m}(\tau, x, y) + \alpha^m \tau\big]
\end{equation*}
where $\alpha^m$ is the Ma\~n\'e critical value  
\begin{equation*}
\alpha^m := \inf\{\alpha \in \R : \exists\; u \in C^1(\T^d) \mbox{ s.t. } \cH(x, Du(x),m) \leq \alpha\}.
\end{equation*}
\begin{proposition}\label{prp:Peierls}
For every $x \in \T^d$, the function $h^m_x : \T^d \to \R$, defined by
\[
h^m_x(y) := h^m(x, y)  \qquad y\in\T^d, 
\]
satisfies   the following equivalent conditions: 
\begin{enumerate}
\item $h^m_x$ is a viscosity solution to \eqref{eq:HJerg}.
\item For all $\tau \geq 0$ we have 
\begin{equation}\label{peierls}
h^m_x(y) + \alpha^m \tau = \inf_{\gamma(\tau) = y} \left\{h^m_x(\gamma(0)) + \int_{0}^{\tau} \cL(\gamma(s), \dot\gamma(s),m)\;ds \right\}.
\end{equation}
\item For any Lipschitz curve $\gamma:[a, b] \to \T^d$ we have 
\begin{equation*}\tag{{\it Dominated curve}}
h^m_x(\gamma(b)) - h^m_x(\gamma(a)) \leq  \int_{a}^{b} \cL(\gamma(s), \dot\gamma(s),m)\;ds + \alpha^m (b-a)
\end{equation*}
and, moreover, for every $x \in \T^d$ there exists a Lipschitz curve $\gamma_x: (-\infty, 0] \to \T^d$ such that $\gamma(0) = x$ and 
\begin{equation*}\tag{\it Calibrated curve}
h^m_x(\gamma_x(b)) - h^m_x(\gamma_x(a))= \int_{a}^{b} \cL(\gamma_x(s), \dot\gamma_x(s),m)\;ds + \alpha^m (b-a)
\end{equation*}
for any $a < b \leq 0$.
\end{enumerate}
\end{proposition}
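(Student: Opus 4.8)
The plan is to read Proposition~\ref{prp:Peierls} as the transcription, to the present $m$-dependent framework, of Fathi's classical characterization of weak KAM solutions; since $m$ is fixed throughout, I suppress it from the notation. The only structure I use comes from assumption~(\textbf{H}): the Lagrangian $\cL(\cdot,\cdot,m)$ in~\eqref{Legendre} is of class $C^2$, strictly convex and superlinear in the velocity, with lower bound and superlinearity modulus uniform in $x$, and the critical value $\alpha^m$ is finite. I would first record the a priori estimates on which everything rests: by superlinearity and the coercivity $\cH\ge c_H(|p|^2-1)$ from~(\textbf{H}), the minimizers of the action $A^m(\tau,x,y)$ are uniformly Lipschitz on bounded time intervals, and the functions $A^m(\tau,\cdot,\cdot)+\alpha^m\tau$ are equi-Lipschitz; consequently the Peierls barrier $h^m_x$ is finite and Lipschitz continuous on $\T^d$.

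The core of the argument is the equivalence of the three conditions, which I would prove as a cycle of implications valid for an arbitrary Lipschitz $w:\T^d\to\R$. For $(1)\Rightarrow(3)$, the \emph{dominated curve} inequality merely expresses that $h^m_x$ is a viscosity subsolution of~\eqref{eq:HJerg} --- a standard equivalence for convex Hamiltonians, obtained by integration along Lipschitz curves --- while the \emph{calibrated curve} is produced by selecting optimal trajectories (backward characteristics) for the solution $h^m_x$. For $(3)\Rightarrow(2)$ I rewrite the two sides of~\eqref{peierls}: integrating the \emph{dominated curve} inequality over $[0,\tau]$ yields one of the two inequalities in~\eqref{peierls}, while evaluating along a calibrated curve that ends at $y$ gives the reverse one. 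Finally, for $(2)\Rightarrow(1)$, I read~\eqref{peierls} as saying that, up to the affine-in-time term $\alpha^m\tau$, $h^m_x$ is a fixed point of the Lax--Oleinik semigroup $T_\tau w(y):=\inf_{\gamma(\tau)=y}\{w(\gamma(0))+\int_0^\tau\cL(\gamma,\dot\gamma,m)\,ds\}$; since $(\tau,y)\mapsto T_\tau w(y)$ is the viscosity solution of the evolutive Hamilton--Jacobi equation attached to $\cH(\cdot,\cdot,m)$, the stationarity of $h^m_x$ forces it to solve the ergodic equation~\eqref{eq:HJerg}.

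It then remains to check that $h^m_x$ actually satisfies (at least) one of the three conditions, hence all of them. I would verify the \emph{dominated curve} inequality directly from the definition of $h^m$ and the semigroup (dynamic programming) property of the action $A^m$, and then construct the calibrated half-line curve $\gamma_x:(-\infty,0]\to\T^d$. I expect this construction to be the main obstacle. The idea is to choose, along a sequence $\tau_n\to+\infty$ realizing the $\liminf$ that defines $h^m_x$, a minimizer of the corresponding action ending at $x$, and then to pass to the limit. The uniform Lipschitz bounds provide, through Arzel\`a--Ascoli and a diagonal extraction, a limit curve defined on all of $(-\infty,0]$; the delicate point is to show that this limit is genuinely calibrated for $h^m_x$ on every interval $[a,b]$, which requires transferring the calibration identity through the $\liminf$ by means of the lower semicontinuity of the action and the domination already established. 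Once calibrated curves on $(-\infty,0]$ are secured, the cycle of implications proved above delivers the three equivalent statements at once.
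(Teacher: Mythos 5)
The paper does not prove this proposition at all: it is recalled as a classical result of weak KAM theory with a pointer to \cite{Fa}, so there is no in-paper argument to compare against. Your outline is precisely the standard Fathi argument (Tonelli a priori bounds, the cycle of equivalences between the viscosity, Lax--Oleinik fixed-point, and dominated/calibrated characterizations, and calibrated rays obtained as limits of action minimizers), and it is structurally sound; the one step you flag as delicate --- constructing the calibrated half-line on $(-\infty,0]$ and transferring calibration through the $\liminf$ --- is exactly the nontrivial content of the weak KAM theorem and would have to be carried out in full, or cited, to make this a proof rather than a sketch. One concrete point your step $(3)\Rightarrow(2)$ would expose: integrating the dominated-curve inequality over $[0,\tau]$ gives $h^m_x(y)\leq \inf_{\gamma(\tau)=y}\bigl\{h^m_x(\gamma(0))+\int_0^\tau\cL\,ds\bigr\}+\alpha^m\tau$, and calibration gives the reverse, so the fixed-point identity consistent with condition~(3) is $h^m_x(y)=\inf\{\cdots\}+\alpha^m\tau$, not $h^m_x(y)+\alpha^m\tau=\inf\{\cdots\}$ as written in \eqref{peierls}; this is a sign slip in the statement rather than in your argument, but you should fix the convention before running the cycle of implications.
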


\begin{definition}
We call projected Aubry set the nonempty compact subset of $\T^d$ defined by 
\begin{equation}\label{eq:aubry}
\mathcal{A}^m := \{x \in \T^d : h^m(x, x) = 0\}.
\end{equation} 
\end{definition}
We recall that the projected Aubry set is nonempty because the $\alpha$-limit set of the Euler flow, due to the compactness of the state space, is nonempty and is contained within $\mathcal{A}^m$. Furthermore, it is important to note that both the Peierls barrier and the Aubry set depend on the fixed measure $m$.\\
We give two properties, well known in this framework, that we will be exploited in the next section to prove the existence of solutions to MFG system. For the proofs of the following  results, we refer to \cite{Fa} and \cite{fr}.
\begin{proposition}\label{bounded_velocity}
For any $m \in \mathcal{P}(\T^d)$, for any $t \in [0,T]$ and for any $x   \in \T^d$ let $\gamma_{x}$ be a calibrated curve for $h^{m}(x, \cdot)$, that is, 
\begin{equation*}
h^{m}(x, \gamma_x(b)) - h^{m}(x, \gamma_x(a)) = \int_{a}^{b} \cL(\gamma_x(s), \dot\gamma_x(s), m)\;ds + \alpha^{m}(b-a)
\end{equation*}
for any $0 \leq a < b \leq T$. Then, we have 
\begin{equation*}
|\dot\gamma_x(t)| \leq \kappa(T). 
\end{equation*}
\end{proposition}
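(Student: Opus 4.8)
The plan is to show that a calibrated curve is forced to lie on the critical energy level of $\cH$, and that on this level the velocity is bounded by a constant depending only on the structural constant $c_H$; the bound will therefore be uniform in $m$, $x$ and $t$, hence in particular dominated by a constant $\kappa(T)$.

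First I would record a uniform lower bound on the Ma\~n\'e critical value. If $u\in C^1(\T^d)$ satisfies $\cH(x,Du(x),m)\le\alpha$ for all $x$, then the coercivity in (\textbf{H}) gives $c_H(|Du(x)|^2-1)\le\alpha$, whence $\alpha\ge -c_H$; taking the infimum over admissible $\alpha$ yields $\alpha^m\ge -c_H$ for every $m\in\cP(\T^d)$, with $c_H$ independent of $m$. Next I would invoke the standard weak KAM description of calibrated curves (see \cite{Fa,fr}): since $\gamma_x$ is calibrated for the viscosity solution $h^m_x$ of \eqref{eq:HJerg}, the function $h^m_x$ is differentiable along $\gamma_x$, the associated momentum $p(s):=D_v\cL(\gamma_x(s),\dot\gamma_x(s),m)$ coincides with $Dh^m_x(\gamma_x(s))$, and $\gamma_x$ solves the Euler--Lagrange flow. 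Evaluating \eqref{eq:HJerg} along the curve then shows that $\gamma_x$ lies on the critical energy level, namely $\cH(\gamma_x(s),p(s),m)=-\alpha^m$ for all $s$ (equivalently, conservation of the energy $\langle D_v\cL,\dot\gamma_x\rangle-\cL$ along the autonomous extremal).

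Combining the two facts closes the estimate. From $\cH(\gamma_x(s),p(s),m)=-\alpha^m\le c_H$ and the coercivity $\cH\ge c_H(|p|^2-1)$ we obtain $|p(s)|\le\sqrt2$; the gradient bound in (\textbf{H}) then gives
\[
|\dot\gamma_x(s)|=\big|D_p\cH(\gamma_x(s),p(s),m)\big|\le c_H\big(1+|p(s)|\big)\le c_H\big(1+\sqrt2\big),
\]
which is independent of $m$, $x$ and $s$ and so may be taken as the constant $\kappa(T)$.

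The point requiring care is the weak KAM input used in the second step: the differentiability of $h^m_x$ along its calibrated curves and the Legendre identification $Dh^m_x(\gamma_x(s))=D_v\cL(\gamma_x(s),\dot\gamma_x(s),m)$, which I would quote from \cite{Fa,fr}; the only quantitative ingredient is the uniform lower bound $\alpha^m\ge -c_H$ together with the constants in (\textbf{H}), which make the estimate uniform in the measure. I would also note a self-contained alternative that does not appeal to the energy-level identity: by the \emph{Dominated/Calibrated curve} property of Proposition~\ref{prp:Peierls}, $\gamma_x$ is a fixed-endpoint Tonelli minimizer on every subinterval of $[0,T]$, and the a priori compactness estimates for minimizers of a superlinear autonomous Lagrangian bound the velocity; applied on $[0,T]$ this route produces a constant genuinely depending on $T$, consistent with the notation $\kappa(T)$.
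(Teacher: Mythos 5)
Your argument is correct in substance, and it is worth noting that the paper does not actually prove this proposition: it is stated with a pointer to \cite{Fa} and \cite{fr}, so there is no internal proof to compare against. What you supply is the standard weak KAM argument, and it is the right one: calibrated curves of the critical solution $h^m_x$ are $C^2$ Tonelli extremals along which $h^m_x$ is differentiable (at interior times) with $Dh^m_x(\gamma_x(s))=D_v\cL(\gamma_x(s),\dot\gamma_x(s),m)$, the curve lies on the critical energy level, and the coercivity plus the gradient bound in $(\mathbf{H})$ turn the level-set constraint into a velocity bound that is uniform in $m$, $x$ and $t$. One caveat on the explicit constant: the paper is not consistent about the sign convention for $\alpha^m$ (equation \eqref{eq:HJerg} reads $\cH+\alpha=0$ while system \eqref{eq:MFG_product} reads $\cH=\alpha^{m(t)}$), so depending on which convention you adopt the relevant bound is $\alpha^m\geq -c_H$ (your computation, giving $|p|\leq\sqrt2$) or $\alpha^m\leq M_\alpha:=\sup_{y,m}|\cH(y,0,m)|$ (giving $|p|\leq\left((M_\alpha+c_H)/c_H\right)^{1/2}$); the latter is precisely the momentum bound the authors use later in the proof of Lemma~\ref{FP_existence}, so your route is fully consistent with how the constant is exploited downstream. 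Either way the bound on $|\alpha^m|$ is uniform and the conclusion holds, with a constant independent of $T$; your closing remark that the Tonelli-minimizer compactness estimate gives an alternative, genuinely $T$-dependent constant is a fair explanation of the notation $\kappa(T)$, though the energy-level argument yields the stronger, $T$-free statement.
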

\begin{proposition}\label{ManeContinuity}
There exists $\bar R>0$ such that, for every $m_1, m_2 \in   \mathcal{P}(\T^d)$, there holds
\[
|\alpha^{m_1} - \alpha^{m_2}| \leq \omega_{\bar R}\left( \Wass(m_1, m_2)\right) 
\] 
where $\omega_R$ is the modulus of continuity introduced in \eqref{eq:cH_m}.
\end{proposition}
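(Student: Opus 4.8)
The plan is to use the variational characterization of the Mañé critical value as an infimum over $C^1$ subsolutions, and to transfer a near-optimal subsolution from $m_1$ to $m_2$ paying only the modulus $\omega_{\bar R}$ of \eqref{eq:cH_m}. The only difficulty is that the gradients of competing subsolutions are not a priori bounded, so \eqref{eq:cH_m} cannot be applied with a fixed radius; the core of the argument is therefore to produce a gradient bound, and hence a radius $\bar R$, that is uniform over all $m \in \cP(\T^d)$.

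First I would establish two-sided uniform bounds on $\alpha^m$. Testing the definition of $\alpha^m$ with constant functions gives $\alpha^m \le \sup_{x,m}\cH(x,0,m)=:C_0$, and this supremum is finite: fixing a reference measure $m_0$ and applying \eqref{eq:cH_m} with $R=1$, together with the compactness (hence boundedness with respect to $\Wass$) of $\cP(\T^d)$, one gets $\cH(x,0,m)\le \max_x \cH(x,0,m_0)+\omega_1(\mathrm{diam}\,\cP(\T^d))<\infty$. For the lower bound, the coercivity $\cH(x,p,m)\ge c_H(|p|^2-1)\ge -c_H$ forces any admissible $\alpha$ to satisfy $\alpha\ge \sup_x \cH(x,Du(x),m)\ge -c_H$, whence $\alpha^m\ge -c_H$. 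Thus $-c_H\le \alpha^m\le C_0$ uniformly in $m$.

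Next I would fix the radius. Given $\eps\in(0,1)$, choose $u\in C^1(\T^d)$ with $\cH(x,Du(x),m_1)\le \alpha^{m_1}+\eps\le C_0+1$ for all $x$. Coercivity then yields $c_H(|Du(x)|^2-1)\le C_0+1$, so that $|Du(x)|\le \bar R:=\sqrt{1+(C_0+1)/c_H}$ for every $x\in\T^d$. The crucial point is that $\bar R$ depends only on $c_H$ and $C_0$, and not on $m_1$ nor on the particular subsolution chosen.

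Finally I would transfer and symmetrize. Since $Du(x)\in B_{\bar R}$ for all $x$, estimate \eqref{eq:cH_m} gives $\cH(x,Du(x),m_2)\le \cH(x,Du(x),m_1)+\omega_{\bar R}(\Wass(m_1,m_2))\le \alpha^{m_1}+\eps+\omega_{\bar R}(\Wass(m_1,m_2))$, so $u$ is an admissible competitor for $m_2$ at this level, and the definition of $\alpha^{m_2}$ as an infimum yields $\alpha^{m_2}\le \alpha^{m_1}+\eps+\omega_{\bar R}(\Wass(m_1,m_2))$. Letting $\eps\to0$ and then exchanging the roles of $m_1$ and $m_2$ gives the claim. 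The main obstacle is exactly the uniform gradient bound of the previous paragraph: without the uniform upper bound $C_0$ on $\alpha^m$ (itself a consequence of compactness and \eqref{eq:cH_m}), one could not guarantee a single radius $\bar R$ serving all measures, and the uniform modulus $\omega_{\bar R}$ appearing in the statement would be lost.
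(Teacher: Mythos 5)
Your proof is correct and follows essentially the same route as the paper's: both rest on the variational characterization of $\alpha^m$ restricted to subsolutions whose gradients lie in a ball $B_{\bar R}$ with $\bar R$ independent of $m$ (a consequence of the coercivity in $({\bf H})$), followed by an application of \eqref{eq:cH_m}. You merely make explicit the uniform two-sided bound on $\alpha^m$ that justifies the choice of $\bar R$ and the $\eps$-symmetrization, steps the paper leaves implicit.
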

\proof
We recall that the Ma\~n\'e critical value can be written as
\begin{equation*}
\alpha^{m } = -\inf_{\varphi \in C^1(\T^d)} \sup_{x \in \T^d} \cH(x, D\varphi(x), m ).
\end{equation*}
By assumption~$({\bf H})$ (in particular, the coercivity of~$\cH$ w.r.t.~$p$), we deduce that there exists a positive constant $\bar R$ (independent of~$m$) such that
\begin{equation*}
\alpha^{m } = -\inf_{\substack{\varphi \in C^1(\T^d) \\ \|D\phi\|_\infty\leq \bar R}}\; \sup_{x \in \T^d}\; \cH(x, D\varphi(x), m ). 
\end{equation*}
By relation~\eqref{eq:cH_m} we easily deduce the statement.
\qed


\section{The quasi-stationary Mean Field Games system: existence of solutions}\label{quasistat_MFG}
In this section, we study the existence of solutions to the first order quasi-stationary MFG system 
\begin{equation}\label{eq:MFG_product}
\begin{cases}
\cH(x, Du(t ), m(t)) = \alpha^{m(t)}, & x \in \T^d,\; \forall\; t  \in [0,T],
\\
\partial_t m(t) - \mbox{div}\big(m(t)D_p\cH(x, Du(t ), m(t))\big)= 0, & (t, x) \in (0,T] \times \T^d,
\\
m(0) = m_0 & x \in \T^d,
\end{cases}
\end{equation}
by means of the weak KAM theory techniques introduced in Section \ref{Peierls}. 

\begin{remark}\em
Note that there are \emph{two} distinct time scales in the ergodic Hamilton-Jacobi equation from the previous system: the exogenous time   $t$, which is present due to the distribution $m(t)$ and, for a fixed $t \in [0, T]$, the intrinsic time $\tau \in \R$ associated with the Peierls barrier, see \eqref{peierls}. Additionally, the Peierls barrier and the Aubry set, being functions of $m(t)$, vary over time. This presents a significant challenge in studying \eqref{eq:MFG_product}, as the continuity properties of these elements with respect to perturbations are generally unknown.
\end{remark} 


\begin{theorem}\label{thm:1}
 Assume {\bf (H)} and  
\begin{itemize}
\item[{\bf (A)}] For any $m \in C([0,T]; \mathcal{P}(\T^d))$ there exists a unique $x_{m} \in \T^d$ such that 
\begin{equation*}
x_{m} \in \bigcap_{t \in [0,T]} \mathcal{A}^{m(t)} ;
\end{equation*}
\item[({\bf IC})] $m_0$ is a Borel probability measure  absolutely continuous  w.r.t. the Lebesgue measure and the density, still denoted by $m_0$, belongs to $L^\infty(\T^d)$.
\end{itemize}
Then, there exists a solution $(u,\alpha,  m)$ to \eqref{eq:MFG_product} such that 
\[
u(t, y) = h^{m(t)}(x_m, y), \quad \forall\; (t, x) \in [0,T] \times \T^d,
\]
where $x_m$ is the point given in {\bf (A)}, and $\alpha=\alpha^{m(t)}$ is the corresponding  Ma\~n\'e critical value. Moreover, the following properties hold.
\begin{itemize}
\item[i)] $\Wass(m(t), m(s)) \leq C_b|t-s|$ for every $t,s\in[0,T]$, where $C_b$ is a constant depending only on the assumptions (see, the proof of Lemma~\ref{FP_existence});
\item[ii)] for each $t\in[0,T]$, the measure $m(t)$ is absolutely continuous w.r.t. the Lebesgue measure and its density (that we still denote $m(t)$) belongs to $L^\infty(\T^d)$ with $\|m(t)\|_\infty\leq C$ where $C$ is a constant independent of~$t$;
\item[iii)] for any $t\in[0,T]$, there holds: $m(t)=X(t,\cdot)\# m_0$, where $X(t,\cdot)$ is the flux given by
\[
X(t,x)=x+\int_0^t D_p\cH\left(X(s,x), D_y h^{m(s)}(x_m, X(s,x)), m(s)\right)ds;
\]
\item[iv)] $u$ is Lipschitz continuous w.r.t. $x$, uniformly w.r.t. $t$;
\item[v)] $u$ is semiconcave w.r.t. $x$, uniformly w.r.t. $t$, and continuous w.r.t. $t$;
\item[vi)] $u(t, \cdot)$ is a weak KAM critical solution, namely it fulfills the properties of Proposition~\ref{prp:Peierls}.
\end{itemize}
\end{theorem}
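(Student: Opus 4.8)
The plan is to build the solution by a Schauder fixed-point argument on $\mathcal{K}:=C([0,T];\cP(\T^d))$, endowed with the uniform-in-time $\Wass$ topology, exploiting the explicit selection $u(t,\cdot)=h^{m(t)}(x_m,\cdot)$ dictated by assumption {\bf (A)}. Given $m\in\mathcal{K}$, assumption {\bf (A)} furnishes the unique point $x_m\in\bigcap_{t}\mathcal{A}^{m(t)}$; fixing this common base point across all $t\in[0,T]$ yields, by Proposition~\ref{prp:Peierls} (valid for any base point), the weak KAM critical solution $u(t,\cdot):=h^{m(t)}(x_m,\cdot)$ of the ergodic equation \eqref{eq:HJerg} with constant $\alpha^{m(t)}$, settling the structure of the candidate and property~vi); the use of a single common point is precisely the device that will make this selection continuous in $t$ and stable in $m$. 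I would first record uniform regularity of the selection: by the uniform convexity and the gradient bound in {\bf (H)}, together with the velocity bound $\kappa(T)$ of Proposition~\ref{bounded_velocity}, the family $\{h^{m(t)}(x_m,\cdot)\}_t$ is equi-Lipschitz and equi-semiconcave in $x$ with constants depending only on the data, giving properties iv) and v) (continuity of $u$ in $t$ being deferred to the continuity of $m\mapsto h^m(x_m,\cdot)$ below).

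Next I would define the drift $b_m(t,x):=D_p\cH(x,D_yh^{m(t)}(x_m,x),m(t))$, which is uniformly bounded (gradient bound in {\bf (H)} applied to the equi-Lipschitz $u$) and defined for a.e.\ $x$ by semiconcavity. Invoking Lemma~\ref{FP_existence} (well-posedness of the continuity equation driven by such a bounded, one-sided-Lipschitz field), I obtain the solution $\widetilde m$ with $\widetilde m(0)=m_0$, represented as $\widetilde m(t)=X(t,\cdot)\# m_0$ along the flow $X$ of $b_m$, which is property iii). The uniform bound on $|b_m|$ gives the Lipschitz-in-time estimate $\Wass(\widetilde m(t),\widetilde m(s))\le C_b|t-s|$ (property i)), while the equi-semiconcavity of $u$ bounds the distributional divergence of $b_m$ from below, hence controls the Jacobian of $X$ and propagates the $L^\infty$ bound of $m_0$ to $\widetilde m(t)$ with a $t$-independent constant (property ii)). This defines the map $\Phi:m\mapsto\widetilde m$.

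The heart of the argument is to apply Schauder's theorem to $\Phi$ on the convex compact subset $\mathcal{K}_b\subset\mathcal{K}$ of $C_b$-Lipschitz curves into $(\cP(\T^d),\Wass)$; compactness follows from Ascoli--Arzel\`a and compactness of $\cP(\T^d)$, and the previous step gives $\Phi(\mathcal{K})\subset\mathcal{K}_b$. It remains to prove that $\Phi$ is continuous. For $m^{(n)}\to m$ in $\mathcal{K}$, I would first show $x_{m^{(n)}}\to x_m$: by the continuity estimate \eqref{eq:cH_m} and Proposition~\ref{ManeContinuity} the critical values converge, the Aubry sets are upper semicontinuous, so every limit point of $x_{m^{(n)}}$ lies in $\bigcap_t\mathcal{A}^{m(t)}$ and equals $x_m$ by the uniqueness in {\bf (A)}. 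Then, using stability of the Peierls barrier under $\Wass$-perturbations (the action functionals converge on the bounded-velocity class of Proposition~\ref{bounded_velocity}, and $\alpha^{m(t)}$ converges by Proposition~\ref{ManeContinuity}), I obtain $h^{m^{(n)}(t)}(x_{m^{(n)}},\cdot)\to h^{m(t)}(x_m,\cdot)$ uniformly in $(t,x)$; this also closes property~v). Since these functions are equi-semiconcave, uniform convergence forces a.e.\ convergence of their gradients, whence $b_{m^{(n)}}\to b_m$ a.e.\ with a uniform bound, and passing to the limit in the weak formulation of the continuity equation (stability of the Lagrangian flow, using $m_0\ll\mathrm{Leb}$ so that the non-differentiability set is negligible) yields $\Phi(m^{(n)})\to\Phi(m)$.

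I expect the delicate point to be exactly this continuity of $\Phi$, and within it the passage from uniform convergence of the selected Peierls barriers to a.e.\ convergence of their \emph{gradients}: the uniform convergence is comparatively soft, but transferring it to the drift in the transport term is where equi-semiconcavity of the selection is indispensable. This is also the precise place where the argument stops short of uniqueness: a.e.\ gradient convergence suffices to pass to the limit in the flux for existence, yet it does not deliver the time-continuity of $Du(t,\cdot)$ that uniqueness would demand, in accordance with the instability of the Aubry set noted in the introduction.
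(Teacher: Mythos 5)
Your proposal follows essentially the same route as the paper: a Schauder fixed point on the convex compact set of $C_b$-Lipschitz curves in $(\cP(\T^d),\Wass)$, with the value function selected as the Peierls barrier based at the common Aubry point $x_m$ from {\bf (A)}, well-posedness of the continuity equation via the Lagrangian-flow theory for semiconcave drifts, and continuity of the fixed-point map obtained from convergence of the base points, stability of the Peierls barriers, and a.e.\ convergence of the gradients forced by equi-semiconcavity. The one step you assert rather than prove --- upper semicontinuity of the Aubry sets, i.e.\ that every limit point of $x_{m^{(n)}}$ lies in $\bigcap_{t}\mathcal{A}^{m(t)}$ --- is exactly where the paper spends most of its effort (constructing almost-closed minimizing loops through $x_{m^{(n)}}$, closing them with short segments to $\overline{x}$, and invoking lower semicontinuity of the action), so that step deserves a full argument rather than a citation.
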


Before passing to the proof of Theorem \ref{thm:1} we provide some examples of quasi-stationary MFG model which fit our assumptions {\bf (H)} and {\bf (A)}. 

\begin{example}\em
\fbox{\textit{(i)}} The basic example is the one of the mechanical Hamiltonian with a positive coupling cost, i.e.,
\begin{equation}\label{ex:H_quad}
\cH(x,p,m)=\dfrac{|p|^2}{2}-F(x, m),
\end{equation}
where $F$ is  continuous in $\T^d\times \cP(\T^d)$ and for any $m\in\cP(\T^d)$
\begin{equation}\label{ex:coupling}
F(0,m)=0\quad\text{and} \quad F(x,m)>0 \,\text{ for $x\neq 0$}.
\end{equation}
For example, given $k\in C(\T^d\times\T^d)$ with $k(0,y)\equiv 0$ and $k(x,y) >0$ in $\T^d\setminus\{0\}\times\T^d$, the function~ 
\[
F(x,m)=\int_{\T^d}k(x,y)m(dy)
\]
fulfills the previous properties. In this case, we have
\[\alpha^m=0 \quad \text{ and} \quad \mathcal{A}^m=\{0\} \qquad \text{ for any  $m\in\cP(\T^d)$} \]
(see \cite[Section 4.14]{Fa}), hence the assumption {\bf (A)} is satisfied. We can replace the quadratic Hamiltonian in \eqref{ex:H_quad} with any reversible Tonelli Hamiltonian $H(p)$, i.e. $H(p)=H(-p)$ for any $p\in\R^d$, such that $H(0)=0$ and $H(p)>0$ for $|p|\neq 0$.

\vspace{0.2cm}
\fbox{\textit{(ii)}} We can also consider a non-separable Hamiltonian of the type
\begin{equation*}
\cH(x,p,m)=F(m)(H(p)- V(x))
\end{equation*}
with $H$ a reversible Tonelli Hamiltonian,  $F(m)\ge \delta>0$  for any $m\in\cP(\T^d)$ and $V:\T^d\to \R$ a continuous function with a unique minimum point $x_0$. In this case,
\[
\alpha^m=-F(m)\displaystyle{\min_{\T^d}}(H(0)- V(x))=-F(m)(H(0)-V(x_0))
\]
and  $\mathcal{A}^m=\{x_0\}$ for any  $m\in\cP(\T^d)$. \qed
\end{example}

For the proof of Theorem \ref{thm:1} we need some preliminary lemmas. For these results, assumptions~$({\bf A})$ and $({\bf IC})$ are not needed.
\begin{lemma}\label{time_continuity}
Assume $({\bf H})$. Let $m \in C([0,T]; \mathcal{P}(\T^d))$ with $\bigcap_{t \in [0,T]} \mathcal{A}^{m(t)}$ not empty and fix any point $x_m \in \displaystyle{\bigcap_{t \in [0,T]}} \mathcal{A}^{m(t)}$. Then, the map 
\begin{equation*}
t \mapsto h^{m(t)}(x_m, y)
\end{equation*}
is continuous for any $y \in \T^d$. 
\end{lemma}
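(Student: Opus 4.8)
The plan is to prove continuity sequentially. Fixing $y\in\T^d$ and $t\in[0,T]$, I would take an arbitrary sequence $t_n\to t$ (so that $\Wass(m(t_n),m(t))\to0$ by continuity of $m$) and show $h^{m(t_n)}(x_m,y)\to h^{m(t)}(x_m,y)$. The whole argument rests on a compactness observation: the family $\{h^{m(t)}(x_m,\cdot)\}_{t\in[0,T]}$ is equi-Lipschitz. Indeed, each $h^{m(t)}(x_m,\cdot)$ is a viscosity solution of \eqref{eq:HJerg} with $m=m(t)$ by Proposition~\ref{prp:Peierls}(1), and the coercivity $\cH(x,p,m)\ge c_H(|p|^2-1)$ in $({\bf H})$, together with the bound on $\alpha^{m(t)}$ obtained from Proposition~\ref{ManeContinuity} and the compactness of $\cP(\T^d)$, forces a gradient bound independent of $t$. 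Since moreover $h^{m(t)}(x_m,x_m)=0$ for every $t$ (because $x_m\in\bigcap_t\mathcal{A}^{m(t)}$), the barriers are also equibounded. By Arzel\`a--Ascoli it then suffices to show that \emph{every} uniform subsequential limit $w$ of $h^{m(t_n)}(x_m,\cdot)$ coincides with $h^{m(t)}(x_m,\cdot)$: the subsequence principle then gives convergence of the full sequence, hence continuity at $t$.

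First I would identify such a limit $w$ as a critical solution of the limiting problem. Since $\cH(\cdot,\cdot,m(t_n))\to\cH(\cdot,\cdot,m(t))$ locally uniformly by \eqref{eq:cH_m} and $\alpha^{m(t_n)}\to\alpha^{m(t)}$ by Proposition~\ref{ManeContinuity}, the standard stability of viscosity solutions yields that $w$ solves $\cH(x,Dw,m(t))=\alpha^{m(t)}$, while passing to the limit in $h^{m(t_n)}(x_m,x_m)=0$ gives $w(x_m)=0$. Next I would show $w\le h^{m(t)}(x_m,\cdot)$ by passing to the limit in the dominated-curve inequality of Proposition~\ref{prp:Peierls}(3): for a fixed Lipschitz curve the inequality converges term by term, the integral being over a \emph{bounded} interval so that continuity of $\cL$ in $m$ applies by dominated convergence. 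Thus $w$ is dominated for $m(t)$, and with $w(x_m)=0$ one gets, for every $\tau>0$, the bound $w(y)\le A^{m(t)}(\tau,x_m,y)+\alpha^{m(t)}\tau$; taking the $\liminf$ as $\tau\to+\infty$ gives exactly $w(y)\le h^{m(t)}(x_m,y)$.

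The main obstacle is the reverse inequality, and this is precisely where the two time scales of the Remark collide: the barrier is a $\liminf$ as the intrinsic time $\tau\to+\infty$, while a naive comparison of the actions in $m(t_n)$ and $m(t)$ produces an error of order $\tau\,\omega_{R}(\Wass(m(t_n),m(t)))$, which is indeterminate when $\tau\to\infty$ and $\Wass\to0$ at once; one therefore cannot let the intrinsic time run freely against the exogenous limit. To bypass the quantitative route I would pin $w$ down through the Aubry point rather than through action estimates: a critical viscosity solution of the $m(t)$-equation is determined by its restriction to $\mathcal{A}^{m(t)}$, and two critical solutions agreeing there agree everywhere. Since $w$ and $h^{m(t)}(x_m,\cdot)$ are both critical solutions vanishing at the Aubry point $x_m$, they agree on the static class of $x_m$, and when the Aubry set reduces to this class --- as in the mechanical examples \eqref{ex:H_quad} where $\mathcal{A}^{m}=\{0\}$ --- this forces $w=h^{m(t)}(x_m,\cdot)$; the bounded velocity of calibrated curves from Proposition~\ref{bounded_velocity} is what supplies the finite-time information needed to transfer the calibration through this step. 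Combined with the upper bound $w\le h^{m(t)}(x_m,\cdot)$ of the previous paragraph, this gives $w=h^{m(t)}(x_m,\cdot)$ and closes the argument.
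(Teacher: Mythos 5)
Your setup is sound as far as it goes: the equi-Lipschitz bound, the Arzel\`a--Ascoli/subsequence-principle reduction, the stability argument showing a uniform limit $w$ is a critical solution with $w(x_m)=0$, and the upper bound $w\le h^{m(t)}(x_m,\cdot)$ obtained by passing to the limit in the dominated-curve property are all correct. But there is a genuine gap at exactly the point you flag, the reverse inequality. The comparison principle you invoke says that two critical solutions which agree on the \emph{whole} projected Aubry set $\mathcal{A}^{m(t)}$ coincide everywhere; here you only know that $w$ and $h^{m(t)}(x_m,\cdot)$ agree at the single point $x_m$. Your conclusion therefore needs $\mathcal{A}^{m(t)}$ to reduce to the static class of $x_m$, which you concede, but this is not among the hypotheses: the lemma assumes only $({\bf H})$ and the nonemptiness of $\bigcap_t\mathcal{A}^{m(t)}$ (assumption $({\bf A})$ is not in force here, and even $({\bf A})$ only makes the \emph{intersection} over $t$ a singleton, not each $\mathcal{A}^{m(t)}$). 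The closing appeal to Proposition~\ref{bounded_velocity} to ``transfer the calibration'' is not an argument as it stands, so the proof is incomplete in the stated generality.

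The idea you are missing --- and the one the paper's proof rests on --- is that membership of $x_m$ in $\mathcal{A}^{m(t_n)}$ yields a \emph{finite-time} representation of the barrier: there is a curve $\gamma_n:[0,1]\to\T^d$ with $\gamma_n(0)=x_m$, $\gamma_n(1)=y$ along which $h^{m(t_n)}(x_m,y)=\int_0^1\cL(\gamma_n,\dot\gamma_n,m(t_n))\,ds+\alpha^{m(t_n)}$, using $h^{m(t_n)}(x_m,x_m)=0$. Testing the dominated-curve inequality for $m(t)$ along the \emph{same} curve $\gamma_n$ and subtracting gives
\begin{equation*}
h^{m(t)}(x_m,y)-h^{m(t_n)}(x_m,y)\le \int_0^1\big[\cL(\gamma_n,\dot\gamma_n,m(t))-\cL(\gamma_n,\dot\gamma_n,m(t_n))\big]\,ds+\alpha^{m(t)}-\alpha^{m(t_n)},
\end{equation*}
and the speed bound $\|\dot\gamma_n\|_\infty\le\kappa(T)$ of Proposition~\ref{bounded_velocity} lets you control the integrand by a modulus in $\Wass(m(t),m(t_n))$ as in \eqref{eq:cH_m}, while Proposition~\ref{ManeContinuity} handles the critical values; the symmetric argument gives the opposite inequality. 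This collapses the intrinsic time $\tau$ to a fixed bounded interval and thereby dissolves the two-time-scale obstruction you correctly identified, with no compactness argument and no uniqueness-of-critical-solutions step needed.
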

 
\proof 

Let $t \in [0,T]$ and let $\{t_n\}_{n \in \N} \subset [0,T]$ be such that $t_n \to t$ as $n \uparrow \infty$. Then, since the function 
\[
y \mapsto h^{m(t_n)}(x_m, y)
\]
is a global critical solution and $x_m$ belongs to the Aubry set, we have that there exists $\{\gamma_n\}_{n \in \N}$ such that $\gamma_n : [0,1] \to \T^d$, $\gamma_n(0)= x_m$ (where $x_m$ is as in the statement), $\gamma_n(1)=y$ and 
\begin{equation}\label{eq_1}
h^{m(t_n)}(x_m, y) - h^{m(t_n)}(x_m, x_m) = \int_{0}^{1} \cL(\gamma_n(s), \dot\gamma_n(s), m(t_n))\;ds + \alpha^{m(t_n)}. 
\end{equation}
Moreover, by the domination property we also have 
\begin{equation}\label{eq_2}
h^{m(t)}(x_m, y) - h^{m(t)}(x_m, x_m) \leq \int_{0}^{1} \cL(\gamma_n(s), \dot\gamma_n(s), m(t))\;ds + \alpha^{m(t)}. 
\end{equation}
Hence, recalling that $h^{m(s)}(x_m, x_m) = 0$ for any $s \in [0,T]$,  taking the difference of \eqref{eq_2} and \eqref{eq_1} we deduce
\begin{multline*}
h^{m(t)}(x_m, y) - h^{m(t_n)}(x_m, y) \\ \leq \int_{0}^{1} \left[\cL(\gamma_n(s), \dot\gamma_n(s), m(t))- \cL(\gamma_n(s), \dot\gamma_n(s), m(t_n))\right]\;ds + \alpha^{m(t)} - \alpha^{m(t_n)}. 
\end{multline*}
From \Cref{bounded_velocity}, we obtain $\|\dot\gamma_n\|_\infty\leq \kappa(T)$; hence, by relation~\eqref{eq:cH_m} and \Cref{ManeContinuity} we accomplish the proof.
\qed

\begin{lemma}\label{lemma:barrier}
Assume $({\bf H})$. Then, the following properties hold.
\begin{itemize}
\item[($i$)] The map $y \mapsto h^{m(t)}(x, y)$ is semiconcave with a linear modulus on $\T^d$, uniformly w.r.t. $t \in [0,T]$ and $x \in \T^d$.
\item[($ii$)] Given 
\[
x_m \in \bigcap_{t \in [0,T]} \mathcal{A}^{m(t)}
\]
the map $t \mapsto D_y h^{m(t)}(x_m, y)$ is measurable for any $y \in \T^d$.
\end{itemize}
\end{lemma}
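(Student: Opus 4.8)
I would prove the two assertions separately, relying on the dynamic programming form of the Peierls barrier (Proposition~\ref{prp:Peierls}, item~2) together with the uniform velocity bound of Proposition~\ref{bounded_velocity}. For part~($i$), the plan is a direct variational perturbation of the optimal trajectory at its endpoint. Fixing the intrinsic time $\tau=1$ in~\eqref{peierls} one writes
\[
h^{m(t)}(x,y)=\inf_{\gamma(1)=y}\Big\{h^{m(t)}(x,\gamma(0))+\int_0^1\cL(\gamma(s),\dot\gamma(s),m(t))\,ds\Big\}-\alpha^{m(t)};
\]
let $\gamma$ be a calibrated minimizer for the endpoint $y$ (it exists by Tonelli's theorem, the coercivity in $({\bf H})$ making $\cL$ superlinear), and for small $h\in\R^d$ introduce the competitors $\gamma^{\pm}(s)=\gamma(s)\pm s\,h$, which satisfy $\gamma^{\pm}(0)=\gamma(0)$ and $\gamma^{\pm}(1)=y\pm h$. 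Testing the identity at $y\pm h$ with $\gamma^{\pm}$ and subtracting twice the identity at $y$, all boundary and $\alpha^{m(t)}$ terms cancel and one is left with
\[
h^{m(t)}(x,y+h)+h^{m(t)}(x,y-h)-2h^{m(t)}(x,y)\le\int_0^1\!\big[\cL(\gamma^{+},\dot\gamma^{+},m(t))+\cL(\gamma^{-},\dot\gamma^{-},m(t))-2\cL(\gamma,\dot\gamma,m(t))\big]\,ds.
\]
Since $\dot\gamma^{\pm}=\dot\gamma\pm h$ and, by Proposition~\ref{bounded_velocity}, $\|\dot\gamma\|_\infty\le\kappa(T)$ uniformly in $t$ and $x$, a second-order Taylor expansion of $\cL$ on the compact set $\T^d\times\overline{B_{\kappa(T)+1}}$ bounds the right-hand side by $C|h|^2$, with $C$ depending only on $\kappa(T)$ and on a bound for $D^2\cL$ there, hence independent of $t$, $x$ and of the minimizer. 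This is exactly the claimed uniform linear semiconcavity.

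The delicate point in part~($i$) is the $m$-uniformity of the second-order bound for $\cL$. The convexity hypothesis $D^2_p\cH\ge 1/c_H$ in $({\bf H})$ yields $D^2_{vv}\cL=(D^2_{pp}\cH)^{-1}\le c_H$ uniformly in $m$; however the mixed and spatial Hessians $D^2_{xv}\cL$ and $D^2_{xx}\cL$ also enter the Taylor estimate, and controlling them requires bounds on $D^2_{xp}\cH$ and $D^2_{xx}\cH$ on compact $p$-sets that are uniform in $m$. I would derive these from the $C^2$ regularity postulated in $({\bf H})$ together with the compactness of $\T^d\times\overline{B_R}\times\cP(\T^d)$; checking that the mere continuity of $\cH$ in $m$ upgrades to the required uniform control of its second derivatives is the main obstacle to make fully rigorous.

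For part~($ii$) I would argue by difference quotients. For each coordinate $e_i$ and each $h\ne0$ set
\[
\Phi^{h}_i(t):=\frac{h^{m(t)}(x_m,y+h\,e_i)-h^{m(t)}(x_m,y)}{h}.
\]
By Lemma~\ref{time_continuity} each numerator is continuous in $t$, so $t\mapsto\Phi^{h}_i(t)$ is continuous, hence measurable; by part~($i$) the map $y\mapsto h^{m(t)}(x_m,y)$ is Lipschitz, so $h\mapsto\Phi^{h}_i(t)$ is continuous for $h\ne0$ and the limit $h\to0$ may be taken along $\mathbb{Q}$. Therefore $\overline g_i(t):=\limsup_{\mathbb{Q}\ni h\to0}\Phi^{h}_i(t)$ and $\underline g_i(t):=\liminf_{\mathbb{Q}\ni h\to0}\Phi^{h}_i(t)$ are measurable in $t$, being countable upper and lower limits of measurable functions. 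The set $E_y:=\{t\in[0,T]:\overline g_i(t)=\underline g_i(t)\ \text{for all}\ i\}$, on which $h^{m(t)}(x_m,\cdot)$ is differentiable at $y$, is then measurable, and on $E_y$ one has $\partial_{y_i}h^{m(t)}(x_m,y)=\overline g_i(t)$. Hence $t\mapsto D_y h^{m(t)}(x_m,y)$ agrees with the measurable field $(\overline g_1,\dots,\overline g_d)$ wherever it is defined, which gives its measurability; here semiconcavity from part~($i$) is used only to guarantee that the difference quotients stay bounded and that the limit captures the gradient.
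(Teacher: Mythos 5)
Your proof is correct and is essentially the argument the paper itself invokes: for part ($i$) the paper simply cites the classical semiconcavity estimate for viscosity solutions of the critical equation (referring to \cite{Fa}), which is exactly the variational endpoint-perturbation you carry out using Proposition~\ref{prp:Peierls} and the velocity bound of Proposition~\ref{bounded_velocity}, and part ($ii$) is stated there as a ``standard consequence'' of Lemma~\ref{time_continuity}, which is precisely your difference-quotient argument. The uniformity in $m$ of the bounds on $D^2_{xx}\cL$ and $D^2_{xv}\cL$ that you flag is a genuine subtlety --- assumption $({\bf H})$ as written only gives pointwise continuity of $\cH$ in $m$ and a lower bound on $D^2_p\cH$ --- but the paper's one-line proof relies on the same implicit uniform $C^2$ control, so your treatment is, if anything, more explicit about what is needed.
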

\proof

We recall that given $t \in [0,T]$ and $x \in \T^d$ the function $y \mapsto h^{m(t)}(x, y)$ is a viscosity solution of the critical equation 
\begin{equation*}
\cH(y, D_y h^{m(t)}(x, y), m(t)) = \alpha^{m(t)}, \quad y \in \T^d;
\end{equation*}
the semiconcavity estimates, uniform w.r.t. time, follows by classical arguments (see \cite{Fa}). Statement ($ii$) is a standard consequence of \Cref{time_continuity}. \qed

\begin{lemma}\label{FP_existence}
Let $({\bf H})$ be in force and assume that  $m\in C([0,T]; \mathcal{P}(\T^d))$ is a solution in the sense of distributions of 
\begin{equation}\label{transport}
\begin{cases}
\partial_t m(t) - \mbox{div}\big(m(t) D_{p}H(y, D_y h^{m(t)}(x_m, y))F(y, m(t)) \big) = 0, & (t, y) \in [0,T] \times \T^d
\\
m(0) = m_0, & y \in \T^d.
\end{cases}
\end{equation}
Then,  the map $t \mapsto m(t)$ is Lipschitz continuous on~$[0,T]$ w.r.t. the $\Wass$ distance with Lipschitz constant $C_b$ independent of~$m$.
\end{lemma}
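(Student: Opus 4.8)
The plan is to combine the Kantorovich--Rubinstein dual formula for $\Wass$ with a uniform $L^\infty$ bound on the drift of the continuity equation \eqref{transport}. Write the drift as
\[
b(t,y):=D_pH\big(y,D_y h^{m(t)}(x_m,y)\big)\,F(y,m(t)),
\]
so that \eqref{transport} reads $\partial_t m(t)=\diver\big(m(t)\,b(t,\cdot)\big)$ in the sense of distributions. The first and decisive step is to show that $\|b\|_\infty$ is bounded by a constant depending only on the structural data in $({\bf H})$, and \emph{not} on the particular solution $m$.

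For this uniform bound, recall that for each fixed $t$ the map $y\mapsto h^{m(t)}(x_m,y)$ is a viscosity (critical) solution of $\cH\big(y,D_y h^{m(t)}(x_m,y),m(t)\big)=\alpha^{m(t)}$. The coercivity estimate $\cH(x,p,m)\ge c_H(|p|^2-1)$ in $({\bf H})$, together with the uniform boundedness of $\alpha^{m(t)}$ (which follows from \Cref{ManeContinuity} and the compactness of $\cP(\T^d)$), forces $|D_y h^{m(t)}(x_m,\cdot)|\le R$ a.e. for a radius $R$ independent of $t$ and of $m$; alternatively one reads this bound off \Cref{bounded_velocity} through the calibrated curves. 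The growth condition $|D_p\cH(x,p,m)|\le c_H(1+|p|)$, again uniform in $m$, and the boundedness of the continuous coupling $F$ on the compact set $\T^d\times\cP(\T^d)$, then give $\|b(t,\cdot)\|_\infty\le C_b$ with $C_b$ depending only on $c_H$, $R$ and $\sup|F|$, hence independent of $m$. Here \Cref{lemma:barrier} ensures that $D_y h^{m(t)}(x_m,\cdot)$ is defined a.e.\ and measurable in $t$, so $b$ is a well-defined bounded measurable field and the previous computation is legitimate.

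With the bound in hand, fix $f\in C^1(\T^d)$ with $\|Df\|_\infty\le 1$ and test the distributional formulation of \eqref{transport} against functions of the form $\psi(t)f(y)$. This shows that $t\mapsto\int_{\T^d}f\,dm(t)$ is (absolutely) continuous with
\[
\Big|\tfrac{d}{dt}\int_{\T^d}f\,dm(t)\Big|=\Big|\int_{\T^d}Df\cdot b(t,\cdot)\,dm(t)\Big|\le \|b(t,\cdot)\|_\infty\le C_b\qquad\text{for a.e. }t.
\]
Integrating between $s$ and $t$ yields $\int_{\T^d}f\,d\big(m(t)-m(s)\big)\le C_b|t-s|$. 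Taking the supremum over all such $f$ — which, by density of $C^1$ functions among $1$-Lipschitz functions, computes $\Wass(m(t),m(s))$ — gives $\Wass(m(t),m(s))\le C_b|t-s|$, which is the assertion.

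I expect the main obstacle to be precisely the uniform velocity bound of the second step: one must guarantee that the Lipschitz constant of the critical solutions $h^{m(t)}(x_m,\cdot)$ does not degenerate as $t$ (equivalently $m(t)$) varies over $[0,T]$. This is exactly where the uniformity in $m$ of the constants in $({\bf H})$ and the continuity of the Ma\~n\'e critical value in \Cref{ManeContinuity} are used in an essential way; once $b$ is bounded uniformly in $t$ and $m$, the remaining duality argument is routine and produces a Lipschitz constant $C_b$ independent of the chosen solution $m$.
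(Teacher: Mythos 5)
Your proof is correct and follows essentially the same route as the paper's: a uniform Lipschitz bound on $h^{m(t)}(x_m,\cdot)$ obtained from the coercivity in $({\bf H})$ together with a uniform bound on $\alpha^{m(t)}$, then the Kantorovich--Rubinstein duality applied to the distributional formulation of \eqref{transport}. The only cosmetic differences are that the paper bounds $|\alpha^{m(t)}|$ directly by $\sup_{(y,m)}|\cH(y,0,m)|$ rather than via \Cref{ManeContinuity} and compactness, and approximates $1$-Lipschitz test functions by mollification rather than by density of $C^1$; both variants are equally valid.
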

\proof 
Let $m\in C([0,T]; \mathcal{P}(\T^d))$ be a solution in the sense of distributions to \eqref{transport}. By definition and by a standard density argument for $\phi$ near times~$t$ and~$s$ (see \cite[Lemma 8.1.2]{ags}), for any $\varphi\in C^1(\T^d)$ and any $0 \leq s < t \leq T$, we have that 
\begin{multline*}
\int_{\T^d} \varphi(y) \big( m(t, dy) - m(s, dy) \big) \\ + \iint_{(s, t) \times \T^d} D\varphi(y) D_p\cH(y, D_y h^{m(\sigma)}(x_m, y), m(\sigma))\; m(\sigma, dy)d\sigma =0.
\end{multline*}
Then, taking $\psi \in C(\T^d)$ such that $|\psi(x) - \psi(y)| \leq |x-y|$ and defining $\psi_\eps = \psi \star \xi_{\eps}$, where $\xi_{\eps}$ is a smooth mollifier, the previous equality gives the following estimate
\begin{multline*}
\int_{\T^d} \psi_{\eps}(y) \big( m(t, dy) - m(s, dy) \big) \\ \leq \iint_{(s, t) \times \T^d} |D\psi_{\eps}(y)||D_p\cH(y, D_y h^{m(\sigma)}(x_m, y), m(\sigma))|\; m(\sigma, dy)d\sigma \\ \leq \|D\psi\|_{\infty}c_H(1 + \sup_{\sigma\in[s,t]}\|D_y h^{m(\sigma)}(x_m, \cdot)\|_{\infty}) |t-s|. 
\end{multline*}
By taking the limit as $\eps \downarrow 0$ in the previous inequality and  recalling that $ \|D\psi\|_{\infty} \leq 1$, the arbitrariness of~$\psi$ yields
\begin{equation}\label{Lip}
\Wass(m(t), m(s)) \leq c_H\left(1 + \sup_{\sigma\in[s,t]}\|D_y h^{m(\sigma)}(x_m, \cdot)\|_{\infty}\right) |t-s|. 
\end{equation}
In order to get the uniform bound for the Lipschitz constant, 
it is enough to obtain that the functions $y\mapsto h^{m(t)}(x_{m}, \cdot)$ are Lipschitz continuous with a Lipschitz constant independent of $m(t)$. Indeed, by standard arguments, for any $m$ and any $t$, there holds
\begin{equation*}|\alpha^{m(t)}|\leq \sup_{(y,m)\in\T^d\times\cP(\T^d)}|H(y,0,m)|=:M_\alpha;
\end{equation*}
hence, by \Cref{prp:Peierls} and {\bf (H)} , we infer 
\[
\|D_y h^{m(t)}(x_m, \cdot)\|_{\infty} \leq \left(\frac {M_\alpha+ c_H }{c_H}\right)^{1/2}.
\]
In conclusion, choosing
\[
C_b:= c_H^{1/2}\left( c_H^{1/2} +(M_\alpha+ c_H )^{1/2}\right)
\]
we accomplish the proof.
\qed

\begin{proof}[Proof of Theorem \ref{thm:1}]
In order to show the existence of solutions to \eqref{transport} we use the Schauder fixed-point theorem. To do so, we consider the set
\begin{equation*}
\mathcal{C}=\left\{m \in C([0,T]; \mathcal{P}(\T^d)): \sup_{t \not=s} \frac{\Wass(m(t), m(s))}{|t-s|} \leq C_b\right\},
\end{equation*}
where $C_b$ is the constant introduced in Proposition~\ref{FP_existence} 
and define the map 
\[
S: \mathcal{C} \to \mathcal{C}
\]
as follows: given $m \in \mathcal{C}$ we fix 
the Peierls barrier $(t, y) \mapsto h^{m(t)}(x_m, y)$, where $x_m$ is the point defined in {\bf (A)}, and we define
\[
S(m) := \mu
\]
where $\mu$ is a solution to
\begin{equation}\label{eq:pt_fisso}
\begin{cases}
\partial_t \mu(t) - \mbox{div}\big(\mu(t) D_{p}\cH(y, D_y h^{m(t)}(x_m, y), m(t)) \big) = 0 & (t, y) \in [0,T] \times \T^d
\\
m(0) = m_0 & x \in \T^d.
\end{cases}
\end{equation}
Note that $\mu$ exists and it is unique by \cite[Section 5]{ac}; indeed, by \cite[Theorem 2.3.1-(i) and theorem A.6.5]{cs} and Lemma~\ref{lemma:barrier} the function $D_y[ D_{p}\cH(\cdot, D_y h^{m(t)}(x_m, \cdot), m(t))]$ is absolutely continuous w.r.t. the Lebesgue measure for a.e. $t\in[0,T]$. Hence, \cite[Section 5]{ac} ensures the existence of a Lagrangian flow (see \cite{ac} for its definition and main properties) associated to   \eqref{eq:pt_fisso}; in particular the function $\mu(t)=X(t,\cdot)\# m_0$, where $X(t,\cdot)$ is the flux given by
\[
X(t,x)=x+\int_0^t D_p\cH\left(X(s,x), D_y h^{m(s)}(x_m, X(s,x)),m(s)\right)ds,
\]
is a solution to~\eqref{eq:pt_fisso}; moreover, for each $t\in[0,T]$, $\mu(t)$ is absolutely continuous w.r.t. the Lebesgue measure and its density belongs to $L^\infty(\T^d)$ with $\|\mu(t)\|_\infty\leq C$ where $C$ is a constant independent of~$t$.
Furthermore, by the Lipschitz estimates established in Proposition~\ref{FP_existence}, $\mu$ belongs to $\mathcal{C}$; hence the map~$S$ is well-posed. Then, since $\mathcal{C}$ is convex and compact, in order to apply the Schauder fixed-point theorem it suffices to show that $S$ is continuous.

Let $\{m_n\}_{n \in \N} \in \mathcal{C}$ be such that $m_n \rightharpoonup m$ in $\mathcal{C}$, let $\{x_n\} \subset \T^d$ be the sequence of points which satisfies {\bf (A)} for $m_n$, i.e for each $n\in\N$ there exists a unique point~$x_n$ in $\bigcap_{t \in [0,T]} \mathcal{A}^{m_n(t)}$, and let
\[
\mu_n = S(m_n).
\]
By compactness of $\T^d$, possibly passing to a subsequence, the sequence $\{x_n\}_{n\in N}$ converges to some point~$\overline{x}$. We claim that 
\[
\overline{x} \in \bigcap_{t \in [0,T]} \mathcal{A}^{m(t)}. 
\]
To do so, it is enough to prove that, for each $t\in[0,T]$, the point~$\bar x$ belongs to $\mathcal{A}^{m(t)}$. Considering 
\begin{equation*}
h^{m_n(t)}(x_n, x_n) = 0
\end{equation*}
by definition there exist $\{\tau_n\}_{n \in \N}$ and $\{\gamma_n\}_{n \in \N}$ such that $\tau_n \uparrow \infty$,  $\gamma_n: [0, \tau_n] \to \T^d$, $\gamma_{n}(0) = \gamma_n(\tau_n)= x_n$ and 
\begin{equation*}
\int_{0}^{\tau_n} \cL(\gamma_n(s), \dot\gamma_n(s), m_n(t))\;ds + \alpha^{m(t)}\tau_n \leq \frac{1}{n}. 
\end{equation*}
By \Cref{bounded_velocity} and Ascoli-Arzela theorem, there exists $\overline\gamma$ such that $\gamma_n$ uniformly converges to $\overline\gamma$ and $\dot\gamma_n$ weakly converges to $\dot{\overline\gamma}$ in $L^2(0,\infty; \T^d)$, on every compact subset of $[0, \infty)$ respectively. Set
\[
d_n = |x_n - \overline{x}|
\]
and define the curve 
\begin{equation*}
\widetilde\gamma_n(s) = 
\begin{cases}
\gamma_n^1 = \mbox{segment } \overline{x} \to x_n, & s \in [-d_n, 0]
\\
\gamma_n(s), & s \in (0, \tau_n]
\\
\gamma_n^2 = \mbox{segment } x_n \to \overline{x}, & s \in (\tau_n, \tau_n + d_n].
\end{cases}
\end{equation*}
Up to a reparametrization, we can assume that $|\dot\gamma_n^i(s)| \leq 1$ (for $i=1, 2$) which yields
\begin{equation*}
\int_{-d_n}^{0} \cL(\gamma_n^1(s), \dot\gamma_n^1(s), m_n(t))\;ds \leq d_n \|\cL\|_{\infty, \T^d \times \overline{B}_1\times\cP(\T^d)} 
\end{equation*}
and the same estimate holds for $\gamma^2_n$ in $[\tau_n, \tau_n+d_n]$. Hence, by lower-semicontinuity of the action functional we have 
\begin{multline*}
h^{\overline{m}(\tau)}(\overline{x}, \overline{x}) = \liminf_{\tau \to \infty} \left\{ \inf_{\gamma(0)=\gamma(\tau) = \overline{x}}\int_{0}^{\tau}\cL(\gamma(s), \dot\gamma(s),  {m}(t))\;ds + \alpha^{m(t)}\tau \right\}
\\
\leq \liminf_{n \to \infty} \left\{ \int_{-d_n}^{\tau_n+d_n} \cL(\widetilde\gamma_n(s), \dot{\widetilde{\gamma}_n}(s), m_n(t))\;ds + \alpha^{m(t)} (\tau_n + 2d_n) \right\}
\\
\leq \liminf_{n \to \infty} \Big(\int_{-d_n}^{0} \cL(\gamma_n^1(s), \dot\gamma_n^1(s), m_n(t))\;ds + \int_{0}^{\tau_n} \cL(\gamma_n(s), \dot\gamma_n(s), m_n(t))\;ds + \alpha^{m(t)}\tau_n \\ + \int_{\tau_n}^{\tau_n+d_n} \cL(\gamma_n^2(s), \dot\gamma_n^2(s), m_n(t))\;ds + 2\alpha^{m(t)}d_n \Big) 
\\
\leq \lim_{n \to \infty} \left(2d_n\sup_{t \in [0,T]}|\alpha^{m(t)}| \|\cL\|_{\infty, \T^d \times \overline{B}_1\times\cP(\T^d)} + \frac{1}{n} \right) = 0
\end{multline*}
which proves that $\overline{x} \in \mathcal{A}_{\overline{m}}(t)$; by the arbitrariness of $t \in [0,T]$ and by assumption {\bf (A)} 
we infer
\begin{equation*}
\{\overline{x} \}=\bigcap_{t \in [0,T]} \mathcal{A}_{\overline{m}}(t).
\end{equation*}
Now, by continuity of the maps $x \mapsto h^{m(t)}(x, y)$ (see, e.g., \cite{Fa}) we have that for any $(t, y) \in [0,T] \times \T^d$ the limit as $n \to \infty$ of
\begin{equation*}
h^{m_n(t)}(x_n, y) \to h^{m(t)}(\overline{x}, y).
\end{equation*}
Moreover, still from the continuity of the map $x \mapsto h^{m(t)}(x, y)$ we deduce that $x \mapsto D_y h^{m(t)}(x, y)$ is measurable. Thus, combining such a property with the uniform semiconcavity of the map $y \mapsto h^{m(t)}(x, y)$ w.r.t. $t$, we also obtain 
\begin{equation*}
D_y h^{m_m(t)}(x_n, y) \to D_y h^{m(t)}(\overline{x}, y), \quad \mbox{on a.e. } (t, y) \in [0,T] \times \T^d
\end{equation*}
invoking \cite{cs}.

Therefore, by compactness of $\mathcal{C}$ we get $\mu_n \rightharpoonup \overline\mu$, up to a subsequence for some $\overline\mu \in \mathcal{C}$. Passing to the limit into the equation associated with $m_n$ we deduce that $\overline\mu$ solve the equation associated with $h^{\overline\mu(t)}(\overline{x}, y)$, i.e., 
\begin{equation*}
\begin{cases}
\partial_t \overline\mu(t) - \mbox{div}\big(\overline\mu(t) D_{p}\cH(y, D_y h^{\overline{\mu}(t)}(\overline{x}, y), \overline\mu(t)) \big) = 0, & (t, y) \in [0,T] \times \T^d
\\
\overline\mu(0) = m_0, & y \in \T^d.
\end{cases}
\end{equation*}
Thus, recalling that 
\begin{equation*}
\alpha^{m(t)} + \cH(y, D_y h^{\overline{\mu}(t)}(\overline{x}, y), \overline\mu(t)) = 0, \quad y \in \T^d, \; \forall\; t \in [0,T],
\end{equation*}
we obtain the existence of a solution.\\
In conclusion, point $(i)$ is an immediate consequence of our construction of $m$ while points $(ii)$ and $(iii)$ are due to \cite{ac}. Moreover, points $(iv)$-$(vi)$ are due respectively to Proposition~\ref{FP_existence} and its proof, Lemma~\ref{lemma:barrier} and \cite[Proposition 2.2]{fr}.
\end{proof}

\small{
}	
\end{document}